\newtheorem*{remark}{Remark}
\DeclareMathOperator*{\argmin}{arg\,min}
\newcommand{\R}{\mathbb{R}}
\newcommand{\ve}[1]{\mathbf{#1}}
\def\x{\ve{x}}
\def\y{\ve{y}}
\def\A{\ve{A}}
\def\D{\ve{D}}
\newtheorem{definition}{Definition}[section]
\newtheorem{Proposition}[definition]{Proposition}
\newcommand{\circledstar}{%
  \tikz[baseline=(char.base)]{
    \node[shape=circle, draw, inner sep=0.1pt ] (char) {$\star$};
  }%
}
\def\BibTeX{{\rm B\kern-.05em{\sc i\kern-.025em b}\kern-.08em
    T\kern-.1667em\lower.7ex\hbox{E}\kern-.125emX}}
\begin{document}

\title{Whiteness-based bilevel learning  of regularization parameters in imaging
\thanks{The authors acknowledge the support by  the ANR JCJC TASKABILE grant ANR-22-CE48-0010.}
}
\author{Carlo Santambrogio$^{1,2}$, Monica Pragliola$^{3}$, Alessandro Lanza$^{4}$, Marco Donatelli$^{1}$, Luca Calatroni$^{2}$
	\\
$^{1}$ \emph{Science and High Technology Department, University of Insubria, Italy}\\
$^{2}$ \emph{Laboratoire I3S, CNRS, UniCA, Inria,  Sophia-Antipolis, France} \\
$^{3}$ \emph{Dept. of Mathematics and Applications, University of Naples Federico II, Italy} \\ 
$^{4}$ \emph{Dept. of Mathematics, University of Bologna, Italy}
}


\maketitle

\begin{abstract}
We consider an unsupervised bilevel optimization strategy for learning regularization parameters in the context of imaging inverse problems in the presence of additive white Gaussian noise. Compared to supervised and semi-supervised metrics relying either on the prior knowledge of reference data and/or on some (partial) knowledge on the noise statistics, the proposed approach optimizes the whiteness of the residual between the observed data and the observation model with no need of ground-truth data. We validate the approach on standard Total Variation-regularized image deconvolution problems which show that the proposed quality metric provides estimates close to the mean-square error oracle and to discrepancy-based principles.
\end{abstract}

\begin{IEEEkeywords}
Imaging inverse problems, parameter estimation, bilevel learning, residual whiteness principle.
\end{IEEEkeywords}

\section{Introduction}

Variational methods are a reference paradigm in the field of ill-posed imaging inverse problems. They aim at stabilizing the unstable inversion process by minimizing a suitable energy functional encoding prior information available both on the desired image (such as sparsity, smoothness) and the noise statistics. Given a blurred, noisy and possibly incomplete (vectorized) image $\y\in\R^m$ and a linear observation model $\A\in\R^{m\times n}$, the task of retrieving a degradation-free image $\x^*\in\R^n, m\leq n$ from $\y$ in the presence of Additive White Gaussian Noise (AWGN) can be reformulated in variational terms as the optimization problem:
\begin{equation}  \label{eq:var_prob}
    \argmin_{\x\in\R^n}~ \left( F(\x;\A,\y,\bm{\lambda}):=\frac{1}{2}\|\A\x-\y\|^2 + R(\x;\bm{\lambda}) \right),
\end{equation}
where $R:\R^n\to \R_{\geq 0} \cup \left\{+\infty\right\}$ enforces prior knowledge through regularization and $\bm{\lambda}\in\Lambda \subseteq \R^{\ell}$ is a vector of hyper-parameters tailoring the amount of regularization in terms of specific local/global features and against the quadratic data term. A popular choice for $R$ consists in promoting sparsity w.r.t.~to some specific representation of the image by choosing for $\lambda>0$ $R(\x;\bm{\lambda}) = \lambda\|\mathbf{\Phi}\x\|_1$, where $\mathbf{\Phi}\in\R^{d\times n}$ or $R(\x;\bm{\lambda}) = \lambda\|\D\x\|_{2,1}$ where $\D\in\mathbb{R}^{2n\times n}$ denotes the discrete image gradient. The former choice has been the object of study in several works in the field of compressed sensing \cite{Candes2006}, while the latter has been extensively employed starting from the pioneering work \cite{ROF} under the name of Total Variation (TV) regularization  which is still a benchmark for model-based approaches for imaging. 

Note that while the choice of a scalar $\lambda>0$ essentially serves to balance the (global) action of the regularizer against the data term, local choices in the form $R(\x;\bm{\lambda})=\sum_{j=1}^n \lambda_j r_j(\x)$ enforces regularization at a local level weighted by a vector of space-variant parameters $\bm{\lambda}$, see, e.g., \cite{Calatroni_2020} for a straightforward space-variant generalization of the TV regularizer and \cite{SIAMrev2021} for some more sophisticated variants. Even in the scalar case, however, choosing an optimal $\lambda = \widehat{\lambda}\,$ is a challenging problem. Classical approaches rely on the use of cross-validation or on the (heuristic) study of the Pareto frontier as in the case of L-curve \cite{Hansen1992}. Provided that some information on the AWGN component is available (i.e., its standard deviation value $\sigma>0$), Morozov-type approaches aim at estimating $\widehat{\lambda}$ by imposing that the solution $\x^*(\widehat{\lambda})$ of \eqref{eq:var_prob} satisfies $\|\A\x^*(\widehat{\lambda}) - \y\|_2^2 \approx m\sigma^2 $ \cite{Morozov1966} or via more refined unbiased estimators \cite{Pesquet2009} depending on $\sigma$. We remark that in practice brute-force methods are still often employed whenever $\ell$ is reasonably small. However, over-parametrized variational and deep-learning models often require estimations of millions of parameters which make their use prohibitive.

Bilevel learning \cite{Haber_2003,SamuelTappen2009,pedregosa16,Kunisch2013,TuomoJMIV2017,SIG-111} is a powerful paradigm for the estimation of optimal hyper-parameters $\widehat{\bm{\lambda}}$. There, the idea consists in optimizing a certain quality measure $\mathcal{Q}(\bm{\lambda},\x^*(\bm{\lambda}))$ assessing the goodness of the solution $\x^*(\bm{\lambda})$ to \eqref{eq:var_prob} with respect to the specific task considered. In the context of imaging, standard bilevel approaches make use of classical mean-squared error (MSE) metrics mimicking SNR-type optimization \cite{Kunisch2013} and/or other ones related to more perception-inspired metrics such as the Structural Similarity Index (SSIM), see, e.g. \cite{TuomoJMIV2017}. While this choice is natural, it suffers from the major problem of requiring  reference ground-truth data for its assessment, which may be restrictive in applications. Other quality measures can be used within a semi-supervised framework providing optimal estimations depending only on information coming from the noise, in a discrepancy-based fashion \cite{Fehrenbach2015}. 

\smallskip

In this work, we propose an unsupervised bilevel learning approach where optimality of $\widehat{\bm{\lambda}}$ is assessed by maximizing the whiteness of the residual between the observations $\y$ and the observation model $\A\x^*(\widehat{\bm{\lambda}})$   \cite{etna,Pragliola2023}. As a proof of concept, we validate the approach for the case of simple TV-regularized image deconvolution problem in the scalar case $\bm{\lambda}=\lambda>0$. Our preliminary results show that the proposed whiteness measure allows to achieve results almost as good as in the fully supervised and semi-supervised case, but depending only on the model $\A$ and the measurements $\y$, making it interesting for applications where the use of a large number of examples is prohibitive.

\section{Methods}

We review in this section the main ingredients of our approach, that is the bilevel learning paradigm for the estimation of $\widehat{\bm{\lambda}}$ in both the supervised and semi-supervised case. We then propose an unsupervised metric based on residual whiteness.

\subsection{Hyper-parameter bilevel learning}

Bilevel optimization approaches compute optimal parameters $\widehat{\bm{\lambda}}$ by solving a nested optimization problem in the form
\begin{align}
    \widehat{\bm{\lambda}}\in & \argmin_{\bm{\lambda}\,\in\,\Lambda}~\sum_{k=1}^K\mathcal{Q}(\x^*_k(\bm{\lambda})) \label{eq:bilevel_sup} \\ 
& \text{s.t.}\quad\x^*_k(\bm{\lambda})\in\argmin_{\x\in\R^n}~F(\x;\A,\y_k,\bm{\lambda}), \quad k=1,\ldots,K, \notag
\end{align}
where, for $k=1,\ldots,K$, $\mathcal{Q}$ is a metric assessing the quality of the reconstructed images $\x^*_k(\bm{\lambda})$ given the measurements $\y_k$ (typically, some blurred, noisy and possibly undersampled image patches) w.r.t.~some reference quantities. 
Solving \eqref{eq:bilevel_sup} usually requires some smoothness assumption on the lower-level variational constraint described by $F$ so as to allow implicit differentiation w.r.t.~$\bm{\lambda}$ by means of the implicit function theorem, see, e.g.~\cite{Kunisch2013,Haber_2003} and Section \ref{sec:opt_bil}. 

\medskip

Regarding the choice of $\mathcal{Q}$, supervised approaches (see, e.g., \cite{Kunisch2013,TuomoJMIV2017}) make use of pairs of exemplar ground-truth images corresponding to the measurements $\left\{\bar{\x}_k,\y_k\right\}$, $k=1,\ldots,K$ so that $\mathcal{Q}$
enforces proximity between $\x^*_k(\bm{\lambda})$ and $\bar{\x}_k$, i.e.:
\begin{equation}  \label{eq:supervised_loss}
\mathcal{Q}_{\text{MSE}}(\x^*(\bm{\lambda});\bar{\x}): = \frac{1}{2}\| \x^*(\bm{\lambda})-\bar{\x}\|_2^2,
\end{equation}
which practically corresponds to choose $\bm{\lambda}$ so as to optimize the Signal to Noise Ratio (SNR) of the reconstructions. 

In \cite{Fehrenbach2015} a semi-supervised quality metric was employed to select the optimal $\widehat{\bm{\lambda}}$. Differently from \eqref{eq:supervised_loss}, the quality loss employed therein relates more to a discrepancy-type measure assessing whether the residual quantity $\bm{r}(\bm{\lambda}):=\A\x^*(\bm{\lambda})-\y$ has magnitude close to the noise intensity, i.e.~$\|\bm{r}(\bm{\lambda}) \|_2^2 \approx m\sigma^2 $, so that a natural semi-supervised choice for dealing with AWGN which does not require the ground truth data $\left\{\bar{\x}_k\right\}$ but an estimate of $\sigma$ is the Gaussianity loss:
\begin{equation}  \label{eq:gaussianity}
\mathcal{Q}_{\text{Gauss}}(\x^*(\bm{\lambda});\sigma) := \frac{1}{2}\left( \|\bm{r}(\bm{\lambda}) \|_2^2 - m\sigma^2  \right)^2.
\end{equation}

\subsection{Residual whiteness principle}

To avoid the prior knowledge of both the reference data ${\bar{\x}_k}$ and the Gaussian noise standard deviation $\sigma$, a different quality measure optimizing the whiteness of the residual $\bm{r}(\bm{\lambda})$ can be used. Employed in an heuristic fashion under the name of \emph{residual whiteness principle} in several papers, see, e.g., \cite{etna,Pragliola2023}, we propose here to use such loss as an unsupervised quality metric for \eqref{eq:bilevel_sup}. We thus consider:
\begin{equation} \label{eq:whiteness}
\mathcal{Q}_{\text{White}}(\x^*(\bm{\lambda})):= \frac{1}{2}\Big\| \frac{\bm{r}(\bm{\lambda})~ \circledstar~  \bm{r}(\bm{\lambda})}{\|\bm{r}(\bm{\lambda})\|_2^2} \Big\|_2^2,    
\end{equation}
where, with a little abuse of notation, we denote by $\x_1~\circledstar~\x_2$ the discrete circular cross-correlation between the matrices $\bm{X}_1, \bm{X}_2\in\R^{n_1\times n_2}$ such that $\text{vec}(\bm{X}_d) = \x_d\in\R^n$, $d=1,2$ and $n_1n_2=n$ which is defined for $(j_1,j_2)\in \left\{0,\ldots,n_1-1\right\}\times\left\{0,\ldots,n_2-1\right\}$  by
\small
\[
(\bm{X}_1~\circledstar~\bm{X}_2)_{j_1,j_2} = \sum_{k_1=0}^{n_1-1} \sum_{k_2=0}^{n_2-1} (\bm{X}_1)_{k_1,k_2}(\bm{X}_2)_{(j_1+k_1)\text{mod }n_1,(j_2+k_2)\text{mod }n_2}.
\]
\normalsize
Note that as observed in \cite{etna} the normalization term in \eqref{eq:whiteness} eliminates the dependence on $\sigma$.

We remark that to deal with noise scenarios different than AWGN, in \cite{Bevilacqua2023} a whiteness measure tailored for Poisson noise was considered. {We thus expect that, under suitable modifications, our proposed approach could suit to more general noise scenarios as well for tailored choice of $\mathcal{Q}_{\text{White}}$.}

\section{Optimization algorithms}  \label{sec:opt_bil}

In this section we discuss the algorithms employed to solve both the lower- and the upper-level optimization problems in \eqref{eq:bilevel_sup}. For simplicity we consider in the following discussion $\Lambda=\R_{>0}$ and $K=1$, that is we look for an optimal positive parameter $\widehat{\lambda}$ based only on the observed image $\y$.  In order to exploit implicit differentiation for the computation of the gradient of the bilevel problem \eqref{eq:bilevel_sup}, we consider a smoothed version $F_\varepsilon$ of the $\ell_2$-TV functional, defined in terms of a $C^2$ Huber smoothing of the TV regularization term given by:
\begin{equation}
H_\varepsilon(\D\x):=\| \D\x \|_{2,1,\varepsilon} = \sum_{j=1}^n h_\varepsilon(  (\D\x)_j ),
\label{eq:TV_smoothed}
\end{equation}
where $(\D\x)_j\in \R^2$ is the $j$-th component of the discrete image gradient $\D\x\in \R^{2n}$ and $h_\varepsilon:\R^2\to \R_{\geq 0}$ is a $C^2$ Huber smoothing function defined by:
\begin{equation}   \label{eq:huber_def}
h_{\varepsilon}(\bm{v}) = \begin{cases}
    \frac{3}{4\varepsilon}\|\bm{v}
\|_2^2 - \frac{1}{8\varepsilon^3}\|\bm{v}\|_2^4\quad&\quad\text{if }\|\bm{v}\|_2<\varepsilon\\
\|\bm{v}\|_2 - \frac{3\varepsilon}{8}\quad&\quad\text{if }\|\bm{v}\|_2\geq\varepsilon\end{cases}.
\end{equation}
The smoothed $\ell_2$-TV functional
\begin{equation} \label{eq:smooth_Feps}
F_\varepsilon(\x;\A,\y,\lambda) := \frac{1}{2}\|\A\x-\y\|_2^2 + \lambda H_\varepsilon(\D\x)
\end{equation}
is thus $C^2$. Its optimization properties are reported in the following proposition.

\begin{Proposition}
The functional $F_{\epsilon}$ defined in (\ref{eq:TV_smoothed})-(\ref{eq:smooth_Feps}) 
is twice continuously differentiable and convex on $\R^n$. Moreover, if ${\rm ker}(\A)\cap {\ker}(\D) = \{\bm{0}_n\}$, $F_{\varepsilon}$ is also coercive and, hence, admits a compact convex set of global minimisers. Its gradient $\nabla_{\x} F_{\varepsilon} \in \R^n$ and  Hessian $\nabla^2_{\x}  F_{\varepsilon} \in \R^{n \times n}$ are given by
\begin{eqnarray}
\nabla_{\x}\, F_{\varepsilon}(\x;\A,\y,\lambda)
&\!\!\!{=}\!\!\!&
\bm{\mathrm{A}}^{\mathrm{T}}\left(  \bm{\mathrm{A}} \x - \y \right)
\:{+}\;
\lambda\: \bm{\mathrm{D}}^{\mathrm{T}} \, 
\nabla H_{\varepsilon}(\bm{\mathrm{D}}\x),
\nonumber\\
\nabla^2_{\x}\, F_{\varepsilon}(\x;\A,\y,\lambda)
&\!\!\!{=}\!\!\!&
\bm{\mathrm{A}}^{\mathrm{T}} \bm{\mathrm{A}}
\:{+}\;
\lambda \: 
\D^{\mathrm{T}}\, 
\nabla^2  H_{\varepsilon}
(\bm{\mathrm{D}}\x) \, \D,
\label{eq:J_Hess}
\end{eqnarray}
where $\nabla  H_{\varepsilon}$ and $\nabla^2  H_{\varepsilon}$ denote the vector (respectively, matrix) of first-(respectively, second-)order derivatives of $H_\varepsilon(\bm{z})$ w.r.t.~$\bm{z}=\D\x$.
For any $\lambda,\varepsilon \in \R_{++}$, the gradient $\nabla_{\x} F_{\varepsilon}$ is thus $L_{\lambda,\varepsilon}$-Lipschitz continuous, 
%
and $L_{\lambda,\varepsilon}$ can be bounded as:
\begin{equation}
\small
L_{\,\lambda,\varepsilon} 
\:{=}\:
\max_{\x \in \R^n} \left\| \nabla^2_{\x} F_{\varepsilon}(\x;\A,\y,\lambda) \right\|_2
\:{\leq}\;
\left\| \bm{\mathrm{A}} \right\|_2^2 + 
\frac{12 \, \lambda}{\varepsilon}
\;{=:}\;\,
\overline{L}_{\lambda,\varepsilon} .
\label{eq:L_bound}
\end{equation}
\end{Proposition}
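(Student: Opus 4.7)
The plan is to proceed through the claims in natural order. First, to establish $C^2$-smoothness of $h_\varepsilon$ on $\R^2$ I would compute $\nabla h_\varepsilon$ and $\nabla^2 h_\varepsilon$ on both pieces of the piecewise definition and verify that their one-sided limits agree on the sphere $\|\bm{v}\|_2=\varepsilon$; by radial symmetry this reduces to checking that $g(t):=\frac{3}{4\varepsilon}t^2-\frac{1}{8\varepsilon^3}t^4$ and $f(t):=t-3\varepsilon/8$ satisfy $g(\varepsilon)=f(\varepsilon)$, $g'(\varepsilon)=f'(\varepsilon)=1$ and $g''(\varepsilon)=f''(\varepsilon)=0$. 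Since the quadratic data term is $C^\infty$ and $\D$ is linear, $F_\varepsilon$ is then $C^2$ on $\R^n$ and the expressions in \eqref{eq:J_Hess} for $\nabla_{\x} F_\varepsilon$ and $\nabla^2_{\x}F_\varepsilon$ follow from the chain rule.

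Next I would establish convexity by showing $\nabla^2 h_\varepsilon(\bm{v})\succeq 0$ pointwise. A direct computation gives, for $\|\bm{v}\|_2<\varepsilon$, the two eigenvalues $\frac{3}{2\varepsilon}-\frac{\|\bm{v}\|_2^2}{2\varepsilon^3}$ (orthogonal to $\bm{v}$) and $\frac{3}{2\varepsilon}-\frac{3\|\bm{v}\|_2^2}{2\varepsilon^3}$ (along $\bm{v}$), both non-negative on this region; for $\|\bm{v}\|_2\geq\varepsilon$ the eigenvalues are $1/\|\bm{v}\|_2$ and $0$. Hence $H_\varepsilon$ is convex as a sum of convex terms, and $F_\varepsilon$ is convex as the sum of a convex quadratic with $\lambda(H_\varepsilon\circ\D)$, the latter being the composition of a convex function with a linear map.

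For coercivity under $\ker(\A)\cap\ker(\D)=\{\bm{0}_n\}$, I would first derive the linear lower bound $h_\varepsilon(\bm{v})\geq\|\bm{v}\|_2-3\varepsilon/8$: on $[0,\varepsilon]$, the function $g-f$ is convex with $(g-f)(\varepsilon)=(g-f)'(\varepsilon)=0$, hence attains its minimum $0$ at $t=\varepsilon$ and is non-negative on the whole interval. Summing over the $n$ blocks of the discrete gradient yields $H_\varepsilon(\D\x)\geq\|\D\x\|_{2,1}-3n\varepsilon/8$. I would then argue by contradiction: if some sequence $\{\x_k\}$ satisfied $\|\x_k\|_2\to\infty$ with $F_\varepsilon(\x_k)$ bounded, then both $\|\A\x_k-\y\|_2$ and $\|\D\x_k\|_{2,1}$ would be bounded; passing to a subsequence with $\x_k/\|\x_k\|_2\to\bar{\x}$, $\|\bar{\x}\|_2=1$, would give $\A\bar{\x}=\bm{0}$ and $\D\bar{\x}=\bm{0}$, contradicting the kernel hypothesis. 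Coercivity together with continuity and convexity of $F_\varepsilon$ then yields a non-empty, compact, convex minimiser set.

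Finally, for the Lipschitz bound, the identity $L_{\lambda,\varepsilon}=\max_\x\|\nabla^2_\x F_\varepsilon\|_2$ is the standard consequence of twice continuous differentiability whenever the Hessian is uniformly bounded in operator norm (mean-value inequality applied componentwise to $\nabla_\x F_\varepsilon$). The estimate $\|\A^T\A\|_2\leq\|\A\|_2^2$ is immediate; for the regularizer, block-diagonality of $\nabla^2 H_\varepsilon(\D\x)$ gives $\|\nabla^2 H_\varepsilon(\D\x)\|_2=\max_j\|\nabla^2 h_\varepsilon((\D\x)_j)\|_2\leq 3/(2\varepsilon)$, the supremum being attained at $\bm{v}=\bm{0}$ by the eigenvalue computation above. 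Combining with the classical estimate $\|\D\|_2^2\leq 8$ for the discrete forward-difference gradient gives $\|\D^T\nabla^2 H_\varepsilon(\D\x)\D\|_2\leq 12/\varepsilon$, whence $\overline{L}_{\lambda,\varepsilon}=\|\A\|_2^2+12\lambda/\varepsilon$. The main subtlety I expect is the coercivity step, which is the only one that truly uses the kernel hypothesis and relies on extracting a linear lower bound from the quartic piece of $h_\varepsilon$; everything else is essentially bookkeeping.
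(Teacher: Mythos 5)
Your proof is correct and follows the same overall route as the paper: chain rule for the gradient/Hessian formulas in \eqref{eq:J_Hess}, and sub-additivity/sub-multiplicativity of the spectral norm together with $\|\D\|_2^2\leq 8$ and the uniform eigenvalue bound $3/(2\varepsilon)$ on $\nabla^2 H_\varepsilon$ to obtain \eqref{eq:L_bound}. The differences lie in the two places where the paper is terse. First, for the key bound on $\nabla^2 H_\varepsilon(\D\x)$ the paper only asserts (explicitly omitting the proof) that the eigenvalues lie in $[0,3/(2\varepsilon)]$ with the upper bound attained; you supply the computation, and your values are correct: eigenvalues $\tfrac{3}{2\varepsilon}-\tfrac{\|\bm{v}\|_2^2}{2\varepsilon^3}$ and $\tfrac{3}{2\varepsilon}-\tfrac{3\|\bm{v}\|_2^2}{2\varepsilon^3}$ for $\|\bm{v}\|_2<\varepsilon$, and $1/\|\bm{v}\|_2$ and $0$ otherwise, with maximum $3/(2\varepsilon)$ at $\bm{v}=\bm{0}$; this simultaneously yields convexity and the $C^2$ matching at $\|\bm{v}\|_2=\varepsilon$ that the paper leaves implicit. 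Second, for coercivity the paper argues in one line that both terms are compositions of a linear map with a coercive function, which as literally stated is insufficient (such a composition is not coercive when the linear map has a nontrivial kernel, and the hypothesis $\ker(\A)\cap\ker(\D)=\{\bm{0}_n\}$ is invoked but not actually deployed in the argument). Your route --- the linear lower bound $h_\varepsilon(\bm{v})\geq\|\bm{v}\|_2-3\varepsilon/8$ obtained from convexity of the gap on $[0,\varepsilon]$, followed by the normalized-subsequence contradiction that genuinely uses the kernel condition --- is the standard complete argument and is the more rigorous of the two. I see no gaps.
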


\begin{proof}
It follows easily from definitions (\ref{eq:TV_smoothed})-(\ref{eq:smooth_Feps}) 
that $F_{\varepsilon}$ is $C^2(\R^n)$ and convex on $\R^n$. Then, if $\text{ker}(\A)\cap \text{ker}(\D) = \{\bm{0}_n\}$, $F_{\varepsilon}$ is coercive as both the fidelity and regularization terms are compositions of a linear map - with coefficient matrix $\A$ and $\D$, respectively - and a coercive function. It follows that $F_{\varepsilon}$ admits a compact convex set of global minimizers. The gradient and Hessian expressions in \eqref{eq:J_Hess} are derived easily by applying the chain rule of differentiation (Jacobian of composite functions). Then, based on \eqref{eq:J_Hess} and on the sub-additivity and sub-multiplicativity properties of the spectral matrix norm, the smallest gradient Lipschitz constant $L_{\lambda,\varepsilon}$ in \eqref{eq:L_bound} satisfies
\begin{eqnarray}
L_{\lambda,\varepsilon} 
&\!\!\!\!{=}\!\!\!&
\max_{\x \in \R^n} \, \left\|  \bm{\mathrm{A}}^{\mathrm{T}} \bm{\mathrm{A}}
\:{+}\;
\lambda \, \bm{\mathrm{D}}^{\mathrm{T}} 
\nabla^2 
H_{\varepsilon}(\bm{\mathrm{D}}\x) \bm{\mathrm{D}} \, \right\|_2
\nonumber\\
&\!\!\!\!{\leq}\!\!\!&
\left\| \bm{\mathrm{A}} \right\|_2^2 + 
\lambda \, \left\| \bm{\mathrm{D}} \right\|_2^2 \,
\max_{\x \in \R^n} \, 
\left\|
\nabla^2
H_{\varepsilon}(\bm{\mathrm{D}}\x)\right\|_2
\nonumber\\
&\!\!\!\!{\leq}\!\!\!&
\left\| \bm{\mathrm{A}} \right\|_2^2 + 
8 \, \lambda \,  
\max_{\x \in \R^n} \, 
\left\|
\nabla^2 
H_{\varepsilon}(\bm{\mathrm{D}}\x)\right\|_2  \, ,
\label{eq:LL}
\end{eqnarray}
where \eqref{eq:LL} comes from recalling that $\left\| \bm{\mathrm{D}} \right\|_2^2 \leq 8$ (with $\left\| \bm{\mathrm{D}} \right\|_2^2 \approx 8$) when the discrete gradient operator $\D$ approximates horizontal and vertical partial derivatives by means of (unscaled) standard finite differences \cite{Chambolle2004}. 
It can be proved (we omit the proof due to the page limit) that $\nabla^2 
H_{\varepsilon}(\bm{\mathrm{D}}\x) \in \R^{2n \times 2n}$ is a real symmetric $2 \times 2$ block matrix with diagonal blocks which admits $\x$-dependent eigenvalue decomposition
\begin{equation}
\nabla^2 
H_{\varepsilon}(\bm{\mathrm{D}}\x)
\,\;{=}\; 
\bm{\mathrm{V}}_{\varepsilon}^{\mathrm{T}}(\D\x) \,
\bm{\mathrm{E}}_{\varepsilon} (\D\x) \,
\bm{\mathrm{V}}_{\varepsilon}(\D\x) \, ,
\end{equation}
with orthogonal modal matrix $\bm{\mathrm{V}}_{\varepsilon}(\D\x)$ and eigenvalue matrix $\bm{\mathrm{E}}_{\varepsilon}(\D\x) = \mathrm{diag}(e_{\varepsilon}^{(1)}(\D\x),\ldots,e_{\varepsilon}^{(2n)}(\D\x))$ satisfying
\begin{equation}
0 \leq e_{\varepsilon}^{(i)}(\D\x) \leq \frac{3}{2 \, \varepsilon}\,,  \;\;\; \forall \, i = 1,\ldots,2n \, , \;\; \forall \, \x \in \R^n \, .
\end{equation}
It thus follows that
\begin{equation}
\max_{\x \in \R^n} \, 
\left\|
\nabla^2 
H_{\varepsilon}(\bm{\mathrm{D}}\x)\right\|_2
\;{=}\;
\max_{\x \in \R^n} \,
\left\| \bm{\mathrm{E}}_{\varepsilon}(\D\x)\right\|_2
\;{=}\;
\frac{3}{2\,\varepsilon} \, ,
\label{eq:E_bound}
\end{equation}
where the last equality comes from the fact that there always exists $\x \in \R^n$ such that at least one among the eigenvalues is equal to the upper bound $3 / (2 \, \varepsilon)$.
By replacing \eqref{eq:E_bound} into \eqref{eq:LL}, we obtain 
\eqref{eq:L_bound}.
\end{proof}
\normalsize

\begin{remark}
We remark that for many inverse imaging problems the value $\left\| \bm{\mathrm{A}} \right\|_2^2$ in \eqref{eq:L_bound} is known a priori or easily computable. For instance, $\left\| \bm{\mathrm{A}} \right\|_2^2 = 1$ in image deblurring (with normalized blur functions), inpainting 
problems.
\end{remark}

\medskip

We now want to make precise the optimization algorithms required to solve both the lower and the upper-level problem.  To do that, and similarly as in \cite{Fehrenbach2015,pedregosa16},  we consider at first a change of variables $\lambda = \exp{(\beta)}$ in order to deal with an unconstrained problem depending on a parameter $\beta\in\R$.  Upon this choice note that there holds:
\[
\mathcal{Q}(\x^*(\lambda)) = \mathcal{Q}(\x^*(w(\beta))),
\]
where $w:\beta\mapsto \lambda$ is the exponential function.  Neglecting for simplicity the dependence of $F_\varepsilon$ on the problem ingredients $\A$ and $\y$ and denoting for ease of notation by $\x^*=\x^*(\lambda)=\x^*(w(\beta))$ the solution of the lower-level problem in \eqref{eq:bilevel_sup} for a fixed $\lambda$, by optimality we get:
\begin{equation}  \label{eq:optimality}
\nabla_{\x} ~F_\varepsilon(\x^*;w(\beta)) = \bm{0},
\end{equation}
which, by differentiating the left-hand-side w.r.t.~$\beta$ and applying the chain rule entails:
\begin{align}
& \frac{\partial \nabla_{\x}~ F_\varepsilon(\x^*;w(\beta))}{\partial \beta}  = \frac{\partial \nabla_{\x}  F_\varepsilon(\x^*;w(\beta))}{\partial w}\frac{\partial w}{\partial \beta} \notag \\
& + 
 \nabla^2_{\x} F_\varepsilon(\x^*;w(\beta)) \frac{\partial \x^*}{\partial \beta},
\end{align}
whence, by \eqref{eq:optimality} and the implicit function theorem, entails:
\[
 \frac{\partial\x^*}{\partial \beta} = -\left( \nabla^2_{\x} F_\varepsilon(\x^*;w(\beta)) \right)^{-1} \frac{\partial \nabla_{\x}  F_\varepsilon(\x^*;w(\beta)}{\partial w}\frac{\partial w}{\partial \beta},
\]
which can be used for the computation of the derivative  of the nested problem so as to get:
\begin{align}
&\frac{\partial \mathcal{Q}(\x^*)}{\partial \beta} =  \nabla_{\x}\mathcal{Q}(\x^*)^T\frac{\partial \x^*}{\partial \beta}   \label{eq:composite_derivative} \\
& = -\nabla_{\x}\mathcal{Q}(\x^*)^T\left( \nabla^2_{\x} F_\varepsilon(\x^*;w(\beta)) \right)^{-1} \frac{\partial \nabla_{\x}  F_\varepsilon(\x^*;w(\beta))}{\partial w}\frac{\partial w}{\partial \beta},  \notag 
\end{align}
where 
the expression of $\nabla_{\x}\mathcal{Q}(\x^*(\lambda))$ depends on the specific expression of the assessment loss considered. Formula \eqref{eq:composite_derivative} is classically used for standard gradient-type algorithms (such as gradient-descent, quasi-Newton\ldots) addressing the bilevel problem \eqref{eq:bilevel_sup}. 

We now describe in Section \ref{sec:AGD} the accelerated first-order solver of the lower-level problem computing (an approximation of) $\x^*$ and describe in Section \ref{sec:GN} a Gauss-Newton strategy using an expression similar to \eqref{eq:composite_derivative} to address the solution of the nested bilevel problem.



\subsection{Accelerated gradient-descent lower-level solver} \label{sec:AGD}

To compute (approximate) minimizers $\x^*$ of the smooth and convex functional $F_\varepsilon$ in \eqref{eq:smooth_Feps} we consider the Nesterov's accelerated gradient-descent algorithm, see Algorithm \ref{algo:Nesterov}. We preferred such approach to Newton-type techniques in order to reduce the computational costs required for the inversion of (approximations of) the Hessian along the iterations. Still, to get a good approximation of $\x^*$ a fairly small relative tolerance parameter $\epsilon$ should be employed to assess optimality. Note that at each iteration $i\geq 1$ of the outer optimization solver, a fixed step-size $L_i=\overline{L}_{\lambda_i,\varepsilon}$ in \eqref{eq:L_bound} depending on the current estimate $\lambda_i$ is used.

\begin{algorithm}
\caption{Nesterov AGD,~ \texttt{Nesterov$_\text{AGD}$}$(\x_0,L_i,\lambda_i,\epsilon)$}\label{algo:Nesterov}
\begin{algorithmic}
\STATE \textbf{Initialize:} $\theta_0=1, \tau=1/L_i,~ \x_{-1}=\x_0, t=0$
\WHILE{$\| \x_{t+1} - \x_{t}\|_2 > \epsilon$}
    \STATE $\theta_{t+1}= \frac{1+\sqrt{1+4\theta_t^2}}{2}$
    \STATE $\bm{z}_{t+1} = \x_t + \frac{\theta_t-1}{\theta_{t+1}}(\x_t-\x_{t-1})$
    \STATE $\x_{t+1} = \bm{z}_{t+1} - \tau \nabla_{\x} F_{\varepsilon}(\bm{z}_{t+1};\A,\y,\lambda_i)$
    \STATE $t=t+1$
\ENDWHILE
\RETURN $\x^*=\x^*(\lambda_i)$
\end{algorithmic}
\end{algorithm}

\subsection{Gauss-Newton upper-level solver}  \label{sec:GN}

We now describe the optimization algorithm solving the outer problem in \eqref{eq:bilevel_sup} for optimizing over $\lambda$ the different quality losses $\mathcal{Q}$ described above. Note that independently on the convexity of the loss function $\mathcal{Q}$ (which holds for instance both for \eqref{eq:supervised_loss} and \eqref{eq:gaussianity}), the nested problem is generally non-convex hence only convergence to local minima $\widehat{\lambda}=\exp{(\widehat{\beta})}$ is expected. In the literature, several algorithms have addressed this task. In \cite{Kunisch2013,TuomoJMIV2017}, for instance, a semi-smooth Newton algorithm was employed. Here, similarly as in \cite{Fehrenbach2015} we employ a Gauss-Newton algorithm which suits well to the squared $\ell^2$-type structure of the three losses employed which can indeed be all expressed as 
\begin{equation} \label{eq:Q_rho}
    \mathcal{Q}(\x^*(\lambda))=\frac{1}{2}\| \rho(\x^*(\lambda)) \|_2^2
\end{equation} 
for suitable choices and dimensionality of $\rho(\x^*(\lambda))$ depending on the choice of $\mathcal{Q}\in\left\{\mathcal{Q}_{\text{MSE}},\mathcal{Q}_{\text{Gauss}}, \mathcal{Q}_{\text{White}} \right\}$. By \eqref{eq:Q_rho}, we observe that
\[
\frac{\partial \mathcal{Q}(\x^*(\exp{(\beta_i)}))}{ \partial \beta} = \rho(\x^*(\exp{(\beta_i)}))^T \bm{J}_{\rho}(\beta_i),
\]
where $\bm{J}_{\rho}(\beta_i) = \frac{\partial \rho (\x^*(\exp{(\beta_i)}))}{\partial \beta}$ can be computed similarly as in \eqref{eq:composite_derivative} depending on the particular choice of $\mathcal{\rho}$. We can now define in Algorithm \ref{algo:GN} a Gauss-Newton solver for the bilevel problem \eqref{eq:bilevel_sup} making explicit use of the residual function $\rho(\cdot)$. The algorithm depends on a tolerance parameter $\epsilon_1$ which assesses stationarity and a line-search parameter $\alpha\in(0,1)$ which could potentially be estimated on the fly by imposing, e.g., Wolfe-type conditions, but which we preferred to fix beforehand.

\begin{algorithm}
\caption{\small Gauss-Newton bilevel solver, \texttt{GN}$_\text{bil}(\beta_0,\epsilon_1,\alpha,\text{max\_it})$}   \label{algo:GN}
\begin{algorithmic}
\STATE \textbf{Initialize:} $i=0$, $\x^*(w(\beta_{-1}))=\y$
\WHILE{$\| d_i\|_2 > \epsilon_1$ and $i < \text{max\_it}$}

    \STATE \small{compute~$\x^*(w(\beta_i))$=\texttt{Nesterov}$_\text{AGD}(\x^*(w(\beta_{i-1})),L_i,w(\beta_i),\epsilon)$}
    
    \STATE \small{using \eqref{eq:composite_derivative}, compute descent direction by solving}:
    \[
     d_i = - \left( \bm{J}_{\rho}
     (\beta_i)^T \bm{J}_{\rho}(\beta_i) \right)^{-1} \left( \bm{J}_{\rho}(\beta_i)^T \rho(\x^*(w(\beta_i)))\right)
    \]
    
    \STATE \small{update using} $
     \beta_{i+1}= \beta_i + \alpha d_i 
     $

    \STATE $i=i+1$
\ENDWHILE
\RETURN $\widehat{\lambda} = w{(\widehat{\beta})}$
\end{algorithmic}
\end{algorithm}

\normalsize

\section{Experimental results}

We now compare the proposed bilevel approaches on two exemplar image deconvolution problems with different types of blur (Gaussian, motion) and AWGN of different magnitude. The algorithmic parameters for both Algorithm \ref{algo:Nesterov} and \ref{algo:GN} are chosen as $\epsilon=10^{-6}$, $\epsilon_1=10^{-5}$, $\alpha=0.1$, max\_it = $60$. The Huber smoothing parameter in \eqref{eq:huber_def} is chosen as $\varepsilon=10^{-3}$, while the initial $\beta$-value in Algoritgm \ref{algo:GN} has been set as $\beta_0=2$.


For our tests, we considered a dataset of 30 test images with size 180$\times$180 pixels from the BSD400 repository \cite{BSD400}. The generic test image $\bar{\x}$ has been corrupted by space-invariant blur defined by a convolution kernel;
 the Gaussian blur kernel used has square support of side 9 pixels and standard deviation 2,  while for the motion blur kernel the support size is 10 pixels and the direction angle is 60$^{\circ}$. The generic blurred image $\A\bar{\x}$ has then been corrupted by realizations of AWGN with different magnitudes. In our set-up, the noise level is quantified by the Blurred Signal-to-Noise Ratio (BSNR) defined by:
\begin{equation}\label{eq:BSNR}
    \mathrm{BSNR}(\y,\bar{\x})=10\log_{10}\frac{\|  \A\bar{\x}-\mathbb{E}({\A\bar{\x}})  \|_2^2}{\|\A\bar{\x} - \y  \|_2^2}\,,
\end{equation}
where $\mathbb{E}({\A\bar{\x}})$ denotes the average intensity of the blurred image $\A\bar{\x}$. Notice that there exists a one-to-one \emph{inverse} relationship between the BSNR and the noise standard deviation and the larger the noise the smaller the BSNR. We thus selected increasing  values of $\mathrm{BSNR}\in\{10,17.5,25,32.5,40\}$.

For each test image and BSNR value we evaluated the quality of the image reconstructed by bilevel optimization \eqref{eq:bilevel_sup} of the regularization parameter $\lambda$ when using the three quality metrics \eqref{eq:supervised_loss} (supervised, S), \eqref{eq:gaussianity} (semi-supervised, SS) and \eqref{eq:whiteness} (unsupervised, U) in terms of average values of PSNR and SSIM computed over the set of images.

In Figure \ref{fig:out_gaussian} we report the 
results obtained in the case of Gaussian blur for the estimation of a scalar parameter $\widehat{\lambda}$. We observe that compared to the MSE oracle and the semi-supervised Gaussianity loss, the proposed whiteness-based procedure provides results which are as good but do not require the use of prior information. 

\begin{figure}[H] 
    \centering
    \begin{tabular}{c}
       \includegraphics[width=0.23\textwidth]{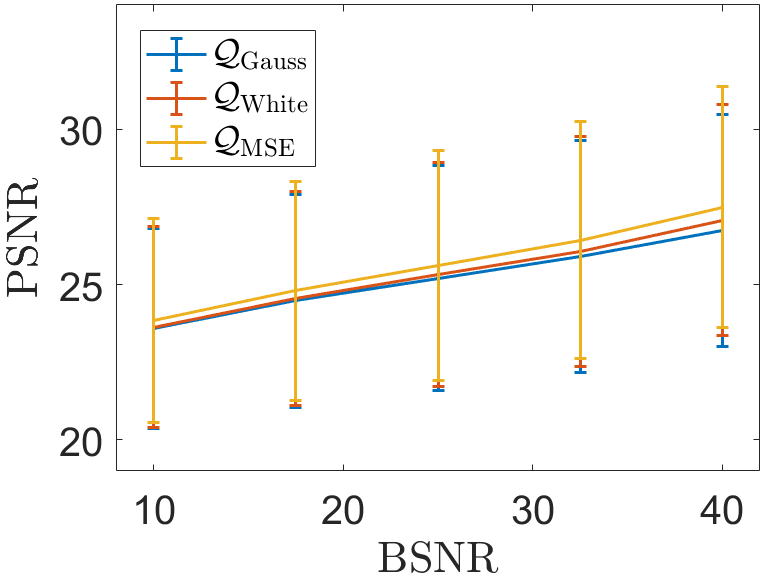} 
         \includegraphics[width=0.23\textwidth]{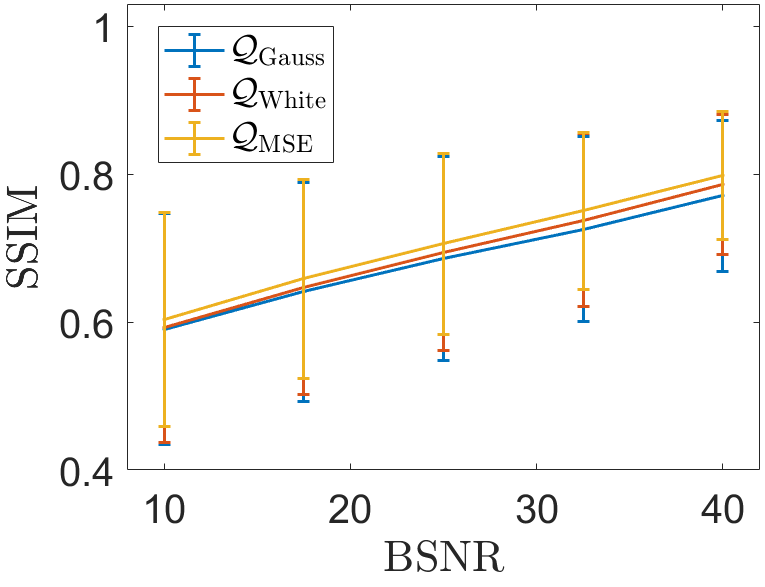}
    \end{tabular}
    \caption{Average PSNR and SSIM and dispersion bands for MSE (S), Gaussianity (SS) and Whiteness (U) loss computed over 30 test images corrupted by Gaussian blur and AWGN of different levels.}
    \label{fig:out_gaussian}
\end{figure}

A similar behavior is observed for a TV deconvolution problem in the presence of motion blur, see Figure \ref{fig:out_motion}. For this second test, we report in Table \ref{tab:1} and in Figure \ref{fig:imout} the numerical values and the visual results obtained for three different images in the dataset, respectively.

\begin{figure}[H]  
    \centering
    \begin{tabular}{c}
       \includegraphics[width=0.23\textwidth]{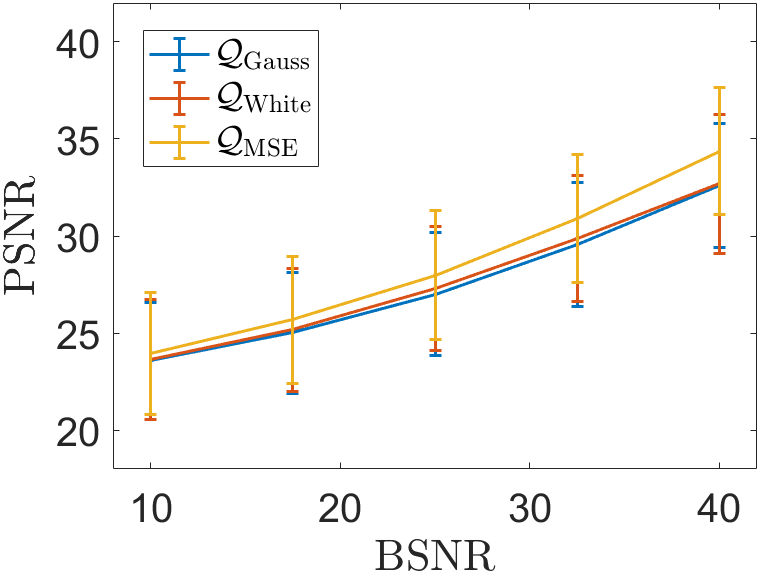} 
         \includegraphics[width=0.23\textwidth]{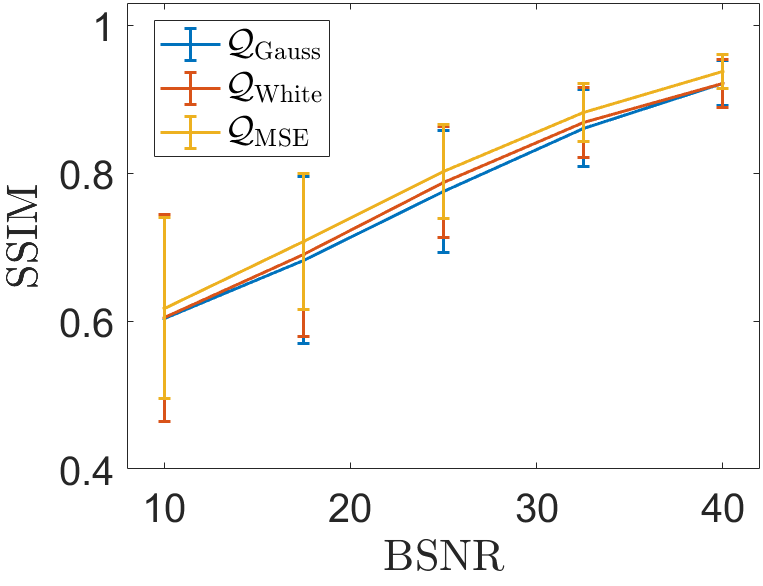}
    \end{tabular}
    \caption{
    Average PSNR and SSIM and dispersion bands  for MSE (S), Gaussianity (SS) and Whiteness (U) loss computed over 30 test images corrupted by motion blur and AWGN of different levels.}
    \label{fig:out_motion}
\end{figure}

\begin{table}  \label{table:assess}
        \centering
        \renewcommand{\tabcolsep}{0.03cm}
        \begin{tabular}{c|c|c|c|c|c|c|}
            \cline{2-7}
            \cline{2-7}
             & \multicolumn{2}{|c|}{\cellcolor[HTML]{ECF4FF} MSE (S)} & \multicolumn{2}{|c|}{\cellcolor[HTML]{ECF4FF} Gaussianity (SS)} & \multicolumn{2}{|c|}{ \cellcolor[HTML]{ECF4FF} Whiteness (U)}\\
             \cline{2-7}
            \cline{2-7}
             & \cellcolor[HTML]{FFFFC7}PSNR & \cellcolor[HTML]{FFFFC7}SSIM & \cellcolor[HTML]{FFFFC7}PSNR & \cellcolor[HTML]{FFFFC7}SSIM & \cellcolor[HTML]{FFFFC7}PSNR & \cellcolor[HTML]{FFFFC7}SSIM  \\
            \hline
            \multicolumn{1}{|c|}{\#1} &\textbf{26.50} & \textbf{0.66} &26.03 (1.8\%) &0.63 (4.1\%)&26.05 (1.7\%) & 0.64 (3.8\%)   \\
             \hline
            \multicolumn{1}{|c|}{\#13} &\textbf{22.67} & \textbf{0.54}&  22.43 (1.1\%)& 0.51 (6.1\%) & 22.50 (0.7\%)& 0.52 (4.6\%) \\
             \hline
            \multicolumn{1}{|c|}{\#22} &\textbf{19.50} & 0.65 (8.8\%) &18.91 (3.0\%) & 0.70 (0.3\%) & 19.02 (2.4\%) & \textbf{0.71} \\
            \hline
        \end{tabular}
        \vspace{0.2cm}
        \flushleft
           \caption{PSNR and SSIM achieved by optimizing the MSE (S), Gaussianity (SS) and Whiteness (U) loss for the three different images in Figure \ref{fig:imout} of the BSD400 dataset corrupted by motion blur and AWGN with BSNR=10. Percentages  w.r.t.~the maximum values (in bold) are reported.}
        \label{tab:1}
    \end{table}

\begin{figure} \label{fig:results}
    \centering
    \renewcommand{\tabcolsep}{0.1cm}
\begin{tabular}{cccc}
&\#1&\#13&\#22\\
\raisebox{.5cm}{\rotatebox{90}{ \footnotesize  GT }}&\includegraphics[width=2cm]{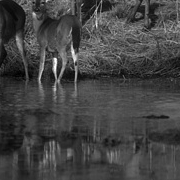}&\includegraphics[width=2cm]{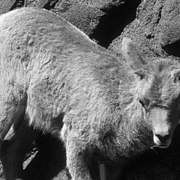}&\includegraphics[width=2cm]{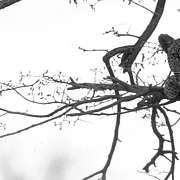}\\
\raisebox{.1cm}{\rotatebox{90}{\footnotesize  $\y$ and kernel $\bm{h}$ }}&\begin{overpic}[width=2cm]{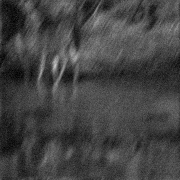}
\put(1,1){\color{red}%
\frame{\includegraphics[scale=0.05]{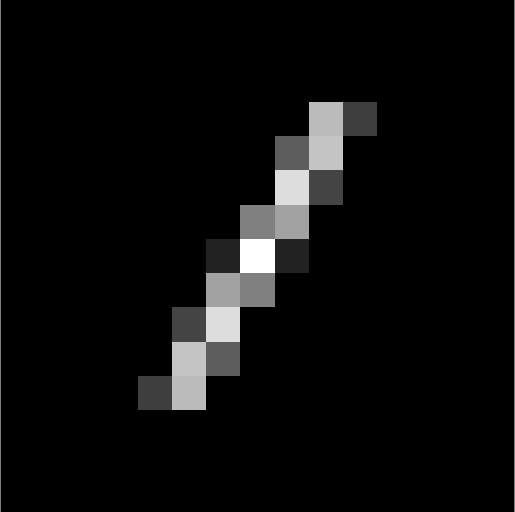}}}
\end{overpic}&\begin{overpic}[width=2cm]{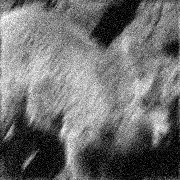}
\put(1,1){\color{red}%
\frame{\includegraphics[scale=0.05]{figs/motion_psf.png}}}
\end{overpic}&\begin{overpic}[width=2cm]{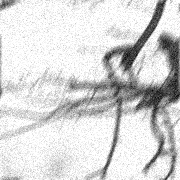}
\put(1,1){\color{red}%
\frame{\includegraphics[scale=0.05]{figs/motion_psf.png}}}
\end{overpic}\\
\raisebox{.5cm}{\rotatebox{90}{ \footnotesize MSE (S)}}&\includegraphics[width=2cm]{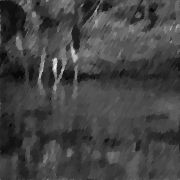}&\includegraphics[width=2cm]{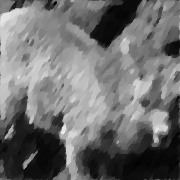}&\includegraphics[width=2cm]{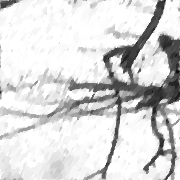}\\
\raisebox{.1cm}{\rotatebox{90}{ \footnotesize Gaussianity (SS)}}&\includegraphics[width=2cm]{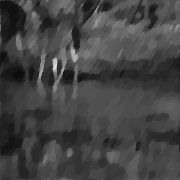}&\includegraphics[width=2cm]{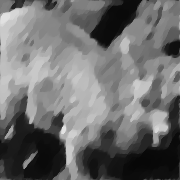}&\includegraphics[width=2cm]{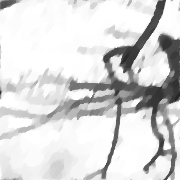}\\
\raisebox{.2cm}{\rotatebox{90}{\footnotesize Whiteness (U)}}&\includegraphics[width=2cm]{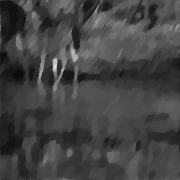}&
\includegraphics[width=2cm]{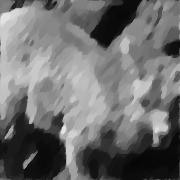}
&\includegraphics[width=2cm]{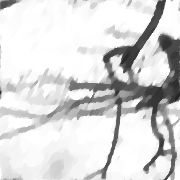}
\end{tabular}    \caption{From top to bottom, row-wise: ground-truth images, data $\y$ corrupted by motion blur and AWGN with BSNR=10, optimal reconstructions achieved by bilevel optimization of MSE (S), Gaussianity (SS) and Whiteness (U) loss. }
    \label{fig:imout}
\end{figure}

\section{Conclusions}
We proposed an unsupervised bilevel learning strategy based on residual whiteness for estimating the regularization parameters in exemplar TV-regularized image deconvolution problems in the presence of AWGN. Our results suggest that such quality measure performs as well as standard MSE-based and discrepancy-type alternatives, but does not rely on any ground truth data nor noise magnitude estimation.  Further work should address its use in more challenging inverse problems and comparisons with recent approaches \cite{Floquet2024} proposed in the context of deep learning  for, e.g., image denoising.

\bibliographystyle{IEEEtran}
\bibliography{refs}


\end{document}